\RequirePackage{fix-cm} 
\documentclass[a4paper,twoside,reqno,11pt]{amsart}

\usepackage{a4wide,verbatim,orcidlink}
\usepackage{amsaddr}
\usepackage[english]{babel}
\usepackage[latin1]{inputenc}
\usepackage[T1]{fontenc}
\usepackage{epsfig,rotating,color}
\usepackage{amsmath,amsfonts,amssymb,amsgen,amsbsy,amsthm,leftindex}
\usepackage{eucal,esint,dsfont,mathrsfs}
\usepackage{stmaryrd} 

\usepackage[square,numbers]{natbib}
\bibliographystyle{unsrtnat}

\newcommand{\eqdef}{\stackrel{\text{\tiny{def}}}{=}} 

\newcommand{\ud}{\mathrm{d}}

\newcommand{\ui}{\mathrm{i}}

\newtheorem{thm}{Theorem}


\newcommand{\half}{{\textstyle{1\over2}}}


\newlength{\intwidth}

\usepackage[OT2,T1]{fontenc}
\DeclareSymbolFont{cyrletters}{OT2}{wncyr}{m}{n}
\DeclareMathSymbol{\Sha}{\mathalpha}{cyrletters}{"58}



\renewcommand{\oint}{\ointctrclockwise}

\title[]{\bf Explicit solution for the hyperbolic homogeneous scalar 
one-dimensional conservation law}

\author[D. Clamond]{Didier CLAMOND}

\address{Universit\'e C\^ote d'Azur, CNRS UMR 7351, 
Laboratoire J. A. Dieudonn\'e,\\ Parc Valrose, F-06108 Nice cedex 2, France.}
\email{didier.clamond@univ-cotedazur.fr}

\date{\today}

\begin{document}
\maketitle

\begin{abstract}
A complex integral formula provides an explicit solution of the initial 
value problem for the nonlinear scalar 1D equation $u_t+[f(u)]_x=0$, for any 
flux $f(u)$ and initial condition $u_0(x)$ that are analytic. This   
formula is valid at least as long as $u$ remains analytic. 

\bigskip
\noindent \textbf{Keywords:} hyperbolic conservation law; nonlinear wave equation; 
explicit solution.

\smallskip
\noindent \textbf{MSC:} {35C15; 35F25; 35L60; 35L65; 30E20.}

\bigskip\bigskip

\end{abstract}

\section{Introduction}\label{Introduction} %

A classical hyperbolic partial differential equation is the one-dimensional 
first-order scalar homogeneous conservation law \citep{Dafermos2016}
\begin{equation} \label{NLWE}
u_t\/+\/\left[\,f(u)\,\right]_x\,=\,0,
\end{equation}
where $u(x,t)$ is the unknown conserved quantity (often a scalar velocity field or a 
density of mass) and $f(u)$ is the flux, the function $f$ being given. Special cases 
include the transport equation when $f(u)=c\/u$, the Hopf (aka inviscid Burgers) equation 
when $f(u)=\half\/u^2$, the Buckley--Leverett equation \citep{BuckleyLeverett1942}, 
various traffic flow equations, and many other more or less well-known equations. 
A typical Cauchy problem is the resolution of \eqref{NLWE} for $x\in\mathds{R}$, $t>0$ 
with the given initial condition $u(x,0)=u_0(x)$. 

For a differentiable flux $f(u)$, denoting $c(u)=\ud f(u)/\ud\/u$, 
the initial value problem can be rewritten 
\begin{equation}\label{IVP}
u_t\/+\/c(u)\,u_x\,=\,0, \qquad u(x,0)\,=\,u_0(x), 
\end{equation}
that is, in particular, a typical model equation for gas dynamics. 
It has the very well-known solution in implicit form \citep{Whitham1974} 
\begin{equation}\label{solimpIVP}
u(x,t)\,=\,u_0\!\left(X\right), \qquad 
X\/\eqdef\/x\/-\/t\,c(u(x,t)).
\end{equation}
Obviously, the celerity $c(u)$ is also given implicitly by the relation 
\begin{equation}\label{solcuimp}
c\!\left(u(x,t)\right)\/=\,c\!\left(u_0(X)\right).
\end{equation} 
Equation \eqref{NLWE} being ubiquitous (in mathematics, physics, etc.),  
it is of theoretical and practical interest to derive an explicit 
solution, at least for some type of solutions. It seems that such a general 
explicit solution never appeared before in the literature, so it is the 
subject of the present note. 

It turns out that, for sufficiently regular $f$ and $u_0(x)$, the solution 
$u(x,t)$ can be written explicitly as the formula \eqref{eqint2} below 
(involving a contour integral in the $z$-plane, complex extension of the 
real $x$-axis). In this short note, we actually give two explicit solutions, 
one for $c(u)$ in section \ref{secsolcu} and one for $u$ in section \ref{secsolu}.

\section{Explicit solution for $c(u)$}\label{secsolcu}

The first exact explicit solution is easily obtained for $c(u)=\ud f(u)/\ud\/u$, 
as expressed in the theorem \ref{thmcu} below.
\begin{thm}\label{thmcu}
If $u_0(x)$ and $c(u)=f'(u)$ are both real-analytic, and (at least) as long as 
$c(u(x,t))$ remains analytic, the initial value problem 
\eqref{NLWE} has the explicit solution for $c(u)$ 
\begin{equation}
c\!\left(u(x,t)\right)=\,\frac{1}{2\/\ui\/\pi\/t}\oint_\gamma\log\!
\left[\/1\/+\/\frac{t\,c\!\left(u_0\!\left(z\right)\right)}{z-x}\right]\ud\/z,
\label{solcuint}
\end{equation}
where $\gamma$ is a rectifiable Jordan curve surrounding $x$ such that $c\circ u_0$ 
is holomorphic inside $\gamma$ and $|z-x|>|t\,c(u_0(z))|$ on $\gamma$. 
\end{thm}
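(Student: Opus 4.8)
The plan is to reduce the formula \eqref{solcuint} to a residue computation built on the implicit characterisation of the solution. First I would set $g \eqdef c\circ u_0$ and recall from \eqref{solimpIVP}--\eqref{solcuimp} that the quantity $w \eqdef c(u(x,t))$ equals $g(X)$, where $X$ is fixed by the characteristic relation $X = x - t\,g(X)$; equivalently $X$ is a zero of the holomorphic function $P(z)\eqdef z - x + t\,g(z)$. The whole argument then hinges on identifying $X$ as the unique zero of $P$ inside $\gamma$ and extracting $g(X)$ from the contour integral.

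Second, I would exploit the hypothesis $|z-x|>|t\,c(u_0(z))|$ on $\gamma$ to control both the topology and the branch of the logarithm. Since $|t\,g(z)|<|z-x|$ on $\gamma$ while $z-x$ has the single simple zero $z=x$ inside $\gamma$ (where $g$ is holomorphic), Rouch\'e's theorem shows that $P(z)=(z-x)+t\,g(z)$ has exactly one zero inside $\gamma$, namely $X$, and that it is simple. The same inequality forces $1+t\,g(z)/(z-x)$ to lie in the disc of radius $1$ about $1$, hence in the right half-plane, so the principal branch of $\log[1+t\,g(z)/(z-x)]$ is single-valued and holomorphic on a neighbourhood of $\gamma$.

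Third, I would integrate by parts against $\ud(z-x)$. The single-valued integrand $\log[1+t\,g(z)/(z-x)]=\log[P(z)/(z-x)]$ has derivative $P'(z)/P(z)-1/(z-x)$, and its boundary contribution vanishes on the closed curve $\gamma$, leaving $-\oint_\gamma (z-x)\,\{P'(z)/P(z)-1/(z-x)\}\,\ud z$. The constant term $\oint_\gamma\ud z=0$ drops out, so the integral collapses to $-\oint_\gamma (z-x)\,P'(z)/P(z)\,\ud z$. By the residue theorem this detects only the simple pole at $z=X$, of residue $X-x$; since $x-X=t\,g(X)$, this yields $\oint_\gamma\log[1+t\,g(z)/(z-x)]\,\ud z = 2\ui\pi\,t\,g(X)$, which is \eqref{solcuint} after dividing by $2\ui\pi t$.

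I expect the main obstacle to be the analytic bookkeeping rather than the residue calculus: one must verify that the principal logarithm is genuinely single-valued along $\gamma$ (so the integration-by-parts boundary term truly cancels) and that the Rouch\'e count isolates exactly the physically relevant characteristic root $X$. This is precisely where the clause ``as long as $c(u(x,t))$ remains analytic'' enters — before shock formation the root $X$ is unique and simple and a contour $\gamma$ meeting the stated inequality exists, whereas at a gradient catastrophe $P$ would acquire a coalescing (double) root and the construction would break down.
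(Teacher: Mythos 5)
Your proof is correct, and it takes a genuinely different route from the paper's. The paper reduces the claim to the implicit relation $X=x+F(X)$ with $F(z)=-t\,c(u_0(z))$ and then simply invokes the integral Lagrange reversion theorem \eqref{solyfint} (derived in its appendix by summing the Lagrange series under a contour integral), so formula \eqref{solcuint} drops out in one line. You instead verify the formula directly: Rouch\'e's theorem, driven by the hypothesis $|z-x|>|t\,c(u_0(z))|$ on $\gamma$, yields a unique simple zero $X$ of $P(z)=z-x+t\,c(u_0(z))$ inside $\gamma$; the same inequality confines $1+t\,c(u_0(z))/(z-x)$ to the disc of radius $1$ about $1$, making the principal logarithm single-valued and holomorphic near $\gamma$; integration by parts then converts the integral into $-\oint_\gamma(z-x)\,P'(z)/P(z)\,\ud z$, which the residue theorem evaluates as $-2\,\ui\,\pi\,(X-x)=2\,\ui\,\pi\,t\,c(u_0(X))$, as required. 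This is precisely the argument-principle verification that the paper alludes to after Theorem \ref{thmu} (``one can easily verify via one integration by parts and some elementary algebra'') but never carries out. The paper's route buys brevity and the connection to the classical Lagrange series (hence small-$t$ expansions); yours buys a self-contained argument with no series-convergence bookkeeping, and it makes explicit both why the contour inequality is needed (it powers the Rouch\'e count) and why the logarithm causes no branch ambiguity. Both proofs share one tacit step---identifying the root selected by the contour (equivalently, by Lagrange reversion) with the characteristic foot $X=x-t\,c(u(x,t))$ of the classical solution---which you at least flag and justify by pre-shock uniqueness and simplicity, whereas the paper glosses over it; to nail it down completely, note that $P$ is real on the real axis, negative where $\gamma$ crosses $\mathds{R}$ to the left of $x$ and positive to the right (by the contour inequality), so the unique zero inside $\gamma$ is real and coincides with the unique real characteristic root before shock formation.
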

\begin{proof} 
Introducing $X\eqdef 
x-t\,c\!\left(u(x,t)\right)$ and $F(X)\eqdef-t\,c\!\left(u_0\!\left(X\right)\right)$,  
relation \eqref{solcuimp} yields $X=x+F(X)$. The latter has explicit solution for $X$ 
given by the integral Lagrange reversion theorem \eqref{solyfint} (c.f. Appendix 
\ref{applres}), yielding at once formula \eqref{solcuint}. 
\end{proof}

Note that, multiplying \eqref{IVP} by $c'(u)$, one gets \citep{Whitham1974}
\begin{equation}
\left[\,c(u)\,\right]_t\/+\left[\,\half\,c(u)^2\,\right]_x\,=\,0,\label{eqadvc}
\end{equation}
that is a Hopf equation for $c(u)$. Thus, if $c(u)=u$, formula \eqref{solcuint} 
provides an explicit solution for $u$. If $c(u)\neq u$ and if $c$ is invertible, 
an explicit solution for $u$ is trivially obtained from \eqref{solcuint}.
However, when $c(u)\neq u$, $c$ may be not invertible, or $c^{-1}$ may be unknown 
or too complicated. Therefore, when $c(u)\neq u$, it is of practical interest to 
derive another explicit solution for $u$.

\section{Explicit solution for $u(x,t)$}\label{secsolu}

The second exact explicit solution is for $u(x,t)$,      
given by the theorem \ref{thmu} below.
  
\begin{thm}\label{thmu}
If $f(u)$ and $u_0(x)$ are both real-analytic, and at least as long 
as $u(x,t)$ remains real-analytic, an explicit solution of the initial 
value problem \eqref{NLWE} is
\begin{align}
u(x,t)\,=\,\frac{1}{2\/\ui\/\pi}\oint_{\gamma}\frac{1\/+\/t\,c'(u_0(z))\,u_0'(z)}
{z\/-\/x\/+\/t\,c(u_0(z))}\,u_0(z)\,\ud\/z,\label{eqint2}
\end{align}
with $u_0'(z)\eqdef\ud u_0(z)/\ud\/z$,  $c(u)\eqdef\ud f(u)/\ud u$, 
$c'(u)\eqdef\ud c(u)/\ud u$ and where $\gamma$ 
is a rectifiable Jordan curve surrounding $x$ such that $u_0(z)$ is holomorphic inside 
$\gamma$ and $|z-x|>|t\,c(u_0(z))|$ on $\gamma$. 
\end{thm}
\begin{proof}
The condition of real-analyticity means that there exist, for all $x\in\mathds{R}$, 
a circle of centre $x$ and radius $R>0$ where $u_0(z)$ and $c(u_0(z))$ are both analytic. 
The condition $|z-x|>|t\,c(u_0(z))|$ on $\gamma$ is necessary for having poles 
inside $\gamma$, otherwise the integral is zero. 

For $t=0$, invoking the residue theorem \citep{Goursat1916}, 
formula \eqref{eqint2} yields  
\begin{equation}
u(x,0)\,=\,\frac{1}{2\/\ui\/\pi}\oint_{\gamma}\frac{u_0(z)\,\ud\/z}
{z\/-\/x}\,=\,u_0(x),
\end{equation}
the existence of $\gamma$ being guaranteed by the real-analyticity of $u_0$, the  
condition $|z-x|>0$ on $\gamma$ bringing no further restrictions when $t=0$.
For $t\neq0$, after differentiation, integration by parts and some elementary 
algebra, \eqref{eqint2} yields 
\begin{align}
u_x(x,t)\,&=\,\frac{1}{2\/\ui\/\pi}\oint_\gamma\frac{u_0'(z)\,\ud\/z}
{z-x+t\,c(u_0(z))},
\qquad
u_t(x,t)\,=\,\frac{\ui}{2\/\pi}\oint_\gamma\frac{c(u_0(z))\,u_0'(z)\,
\ud\/z}{z-x+t\,c(u_0(z))},
\end{align}
and with the change of dummy variable $Z\eqdef z+t\,c(u_0(z))$ --- so $\gamma
\mapsto\Gamma$, $u_0(z)\mapsto U_0(Z)\eqdef u_0(z(Z))$ and $c(u_0(z))\mapsto 
c(U_0(Z))$
--- one gets (exploiting the residue theorem)
\begin{align}
u_x\/=\/\frac{1}{2\/\ui\/\pi}\oint_\Gamma\frac{U_0'(Z)\,\ud\/Z}{Z-x}\,=\,U_0'(x), 
\quad
u_t\/=\/\frac{\ui}{2\/\pi}\oint_\Gamma\frac{c(U_0(Z))\,U_0'(Z)\,\ud\/Z}{Z-x}\/
=\/-\/c(U_0(x))\,U_0'(x). \label{uxut}
\end{align}
Moreover, from the integral Lagrange reversion formula \eqref{solyfint} and the 
theorem \ref{thmcu}, we have
\begin{equation}
Z\,=\,x\quad\implies\quad z\,=\,x\/+\/\frac{\ui}{2\/\pi}\oint_\gamma
\log\!\left[\/1\/+\/\frac{t\,c(u_0(\zeta))}{\zeta-x}\right]\ud\/\zeta\,
=\,x\/-\/t\,c\!\left(u(x,t)\right),
\end{equation}
hence $U_0(x)=u_0(x\/-\/t\,c\!\left(u(x,t)\right))=u(x,t)$ and then $c(U_0(x))
=c\!\left(u(x,t)\right)$. Therefore, the relations \eqref{uxut} 
yield $u_t+c(u)u_x=\left[\,c(u(x,t))\/-\/c(U_0(x))\,\right]U_0'(x)=0$.
\end{proof}

Note that formula \eqref{eqint2} is a special form of the so-called {\em argument 
principle\/} \citep{Beardon2020}. Formula \eqref{solcuint} is also  a special form 
of the argument principle, as one can easily verify via one integration by parts 
and some elementary algebra.

\section{Discussion}

To the author knowledge, formula \eqref{eqint2} is the first explicit solution of 
equation \eqref{NLWE} for quite general $f(u)$ and $u_0(x)$ (though {\em a priori\/} 
restricted to 
$f$ and $u_0$ being real-analytic). Note that the so-called {\em explicit\/} 
Lax--Ole\u{\i}nik formula \citep{Coclite2024} is actually only semi-explicit because 
it requires the resolution of an optimisation problem involving the Legendre 
transform of the flux $f(u)$.  

The proof above is valid if there exists a contour $\gamma$ enclosing $x-t\,c(u_0(z))$ 
within the analyticity region of $u_0$. This condition of existence 
is satisfied as long as $u(x,t)$ remains real-analytic. The explicit solution 
\eqref{eqint2} is thus valid for real-analytic initial condition $u_0(x)$ and flux 
$f(u)$, as long as $u(x,t)$ remains real-analytic. 
However, it is well-known that, in general, singularities appear in finite time 
\citep{Dafermos2016}. 

Whether or not the solution \eqref{eqint2} can be used (or generalised) 
to also describe some weak solutions is the subject of current investigations; 
there is room for hope in this direction, for at least three reasons. 
First, theorems \ref{thmcu} and \ref{thmu} should remain valid if ``real-analytic'' 
is replaced by ``piecewise real-analytic''; the question on how to join the pieces 
being open. Second, as shown in \eqref{eqint2}, the explicit solution is 
obtained from a generalised argument principle, a ``principle'' applicable 
with several zeros and poles in the integrand \citep{Beardon2020,Goursat1916}. 
Third, recently, a relation somehow similar to \eqref{eqint2} was derived by 
\citet{Gerard2023} 
for the Benjamin--Ono equation $u_t+uu_x=\epsilon\/\mathscr{H}u_{xx}$ ($\mathscr{H}$ 
the Hilbert transform). In the dispersionless limiting case $\epsilon\to0$, this 
equation reduces to the inviscid Burgers equation, and \citet{Gerard2024} shows that 
his explicit formula remains valid in this limiting case.

\appendix

\section{Residue and Lagrange reversion theorems}\label{applres}

The residue theorem \citep{Goursat1916}, for any function $F$ analytic inside a 
contour $\gamma$ enclosing $x$ and continuous on $\gamma$, is
\begin{equation}\label{resfor}
\frac{\ud^n\/F(x)}{\ud\/x^n}\ =\ \frac{n!}{2\/\ui\/\pi}\oint_\gamma
\frac{F(z)\,\ud\/z}{(z-x)^{n+1}} \qquad \forall n\in\mathds{N}_0.
\end{equation}
Note that: {\em i}) if $x$ is outside $\gamma$, then the integral is zero; 
{\em ii}) if $x$ is on $\gamma$, then the integral must be interpreted in 
the sense of Cauchy's principal value, and $2\pi$ must be replaced by the 
value of the inner angle at $x$ (e.g., $\pi$ where the contour is smooth).

According to the Lagrange reversion theorem \citep{Goursat1916}, the implicit equation 
$y=x+F(y)$ with $F$ an analytic function, has explicit solution
\begin{align}\label{lagrev}
y(x)\,&=\,x\/+\/\sum_{n=1}^\infty\frac{1}{n!}\/\frac{\ud^{n-1}\/F(x)^n}{\ud\/x^{n-1}}
\,=\,y(0)\/+\/\int_0^x\sum_{n=0}^\infty\frac{1}{n!}\/\frac{\ud^{n}\/F(\xi)^n}
{\ud\/\xi^{n}}\,\ud\/\xi,
\end{align}  
the second equality being of practical interest if the integral can be obtained in 
closed-form.
From the second equality \eqref{lagrev}, the residue theorem yields
\begin{align}
y(x)\,&=\,y(0)\/+\/\int_0^x\oint_\gamma\,\sum_{n=0}^\infty\left[\frac{F(z)}{z-\xi}
\right]^n\/\frac{\ud\/z}{z-\xi}\,\frac{\ud\/\xi}{2\/\ui\/\pi}\,=\,y(0)\/+\/
\oint_\gamma\int_0^x\frac{\ud\/\xi}{z-\xi-F(z)}\,\frac{\ud\/z}{2\/\ui\/\pi}\nonumber\\
&=\,\frac{\ui}{2\/\pi}\oint_\gamma\log\!\left[\/z-x-F(z)\/\right]\ud\/z\,=\,
x\/+\/\frac{\ui}{2\/\pi}\oint_\gamma\log\!\left[\/1\/-\/\frac{F(z)}{z-x}\right]\ud\/z, 
\label{solyfint} 
\end{align} 
provided that $|z-x|>|F(z)|$ everywhere on $\gamma$. The last equality in \eqref{solyfint} 
can also be obtained directly applying a similar summation to the first equality of 
\eqref{lagrev}. 
Formula \eqref{solyfint} is called here the {\em integral Lagrange reversion theorem\/} 
\citep[\S51]{Goursat1916} 
(it can also be derived directly from the generalised argument principle 
\citep[\S48, eq. 40]{Goursat1916}).

\addcontentsline{toc}{section}{References}
\bibliographystyle{abbrvnat}

\begin{thebibliography}{8}
\providecommand{\natexlab}[1]{#1}
\providecommand{\url}[1]{\texttt{#1}}
\expandafter\ifx\csname urlstyle\endcsname\relax
  \providecommand{\doi}[1]{doi: #1}\else
  \providecommand{\doi}{doi: \begingroup \urlstyle{rm}\Url}\fi

\bibitem[Beardon(2020)]{Beardon2020}
A.~F. Beardon.
\newblock \emph{Complex Analysis: The Argument Principle in Analysis and
  Topology}.
\newblock Dover, 2020.

\bibitem[Buckley and Leverett(1942)]{BuckleyLeverett1942}
S.~E. Buckley and M.~C. Leverett.
\newblock Mechanism of fluid displacements in sands.
\newblock \emph{Trans. AIME}, 146:\penalty0 107--116, 1942.

\bibitem[Coclite(2024)]{Coclite2024}
G.~M. Coclite.
\newblock \emph{Scalar Conservation Laws}.
\newblock SpringerBriefs in Mathematics. Springer, 2024.

\bibitem[Dafermos(2016)]{Dafermos2016}
M.~Dafermos, C.
\newblock \emph{Hyperbolic Conservation Laws in Continuum Physics}, volume 325
  of \emph{A Series of Comprehensive Studies in Mathematics}.
\newblock Springer, 4th edition, 2016.

\bibitem[G\'erard(2023)]{Gerard2023}
P.~G\'erard.
\newblock An explicit formula for the {B}enjamin--{O}no equation.
\newblock \emph{Tunisian J. Math.}, 5\penalty0 (3):\penalty0 593--603, 2023.

\bibitem[G\'erard(2024)]{Gerard2024}
P.~G\'erard.
\newblock The zero dispersion limit for the {B}enjamin--{O}no equation on the
  line.
\newblock \emph{Comptes R. Math.}, 363:\penalty0 619--634, 2024.

\bibitem[Goursat(1916)]{Goursat1916}
E.~Goursat.
\newblock \emph{A course in Mathematical Analysis}, volume II, part I.
\newblock Ginn \& Co., 1916.

\bibitem[Whitham(1974)]{Whitham1974}
G.~B. Whitham.
\newblock \emph{Linear and nonlinear waves}.
\newblock Wiley, 1974.

\end{thebibliography}

\end{document}